\definecolor{webgreen}{rgb}{0,.5,0}
\def\N{{\mathds{N}}}
\def\Z{{\mathds{Z}}}
\def\1{{\bf 1}}
\def\nr{{\trianglelefteq}}
\def\lcm{\operatorname{lcm}}
\newtheorem{theorem}{Theorem}[section]
\newtheorem{thm}[theorem]{Theorem}
\newtheorem{cor}[theorem]{Corollary}
\newtheorem{prop}[theorem]{Proposition}
\newtheorem{remark}[theorem]{Remark}
\begin{document}
\title{{\bf Subgroups of finite abelian groups having rank two via Goursat's lemma}}
\author{L\'aszl\'o T\'oth}
\date{}
\maketitle

\centerline{Tatra Mt. Math. Publ. {\bf 59} (2014), 93--103}
\begin{abstract} Using Goursat's lemma for groups, a
simple representation and the invariant factor decompositions of the
subgroups of the group $\Z_m \times \Z_n$ are deduced, where $m$ and
$n$ are arbitrary positive integers. As consequences, explicit
formulas for the total number of subgroups, the number of subgroups
with a given invariant factor decomposition, and the number of
subgroups of a given order are obtained.
\end{abstract}

{\sl 2010 Mathematics Subject Classification}:  20K01, 20K27, 11A25

{\it Key Words and Phrases}: cyclic group, direct product, subgroup,
number of subgroups, Goursat's lemma, finite abelian group of rank
two

\section{Introduction}

Let $\Z_m$ denote the additive group of residue classes modulo $m$
and consider the direct product $\Z_m \times \Z_n$, where $m,n \in
\N:=\{1,2,\ldots\}$ are arbitrary. Note that this group is
isomorphic to $\Z_{\gcd(m,n)}\times \Z_{\lcm(m,n)}$. If
$\gcd(m,n)=1$, then it is cyclic, isomorphic to $\Z_{mn}$. If
$\gcd(m,n)>1$, then $\Z_m \times \Z_n$ has rank two. We recall that
a finite abelian group of order $>1$ has rank $r$ if it is
isomorphic to $\Z_{n_1} \times \cdots \times \Z_{n_r}$, where
$n_1,\ldots,n_r \in \N\setminus \{1\}$ and $n_j \mid n_{j+1}$ ($1\le
j\le r-1$), which is the invariant factor decomposition of the given
group. Here the number $r$ is uniquely determined and represents the
minimal number of generators of the group. For general accounts on
finite abelian groups see, e.g., \cite{Mac2012, Rot1995}.

In this paper we apply Goursat's lemma for groups, see Section
\ref{section_Goursat_lemma}, to derive a simple representation and
the invariant factor decompositions of the subgroups of $\Z_m \times
\Z_n$ (Theorem \ref{prop_repres}). These are new results, as far as we know.
Then, we deduce as consequences, by purely number theoretical arguments, explicit formulas for the
total number of subgroups of $\Z_m \times \Z_n$ (Theorem \ref{prop_total_number_subgroups}), the number of its subgroups
of a given order (Theorem \ref{prop_total_number_subgroups_ord}) and the number of subgroups with a given invariant factor
decomposition (Theorem \ref{prop_number_subgroups_type}, which is another new result). The number of cyclic subgroups (of a given order)
is also treated (Theorems \ref{prop_number_cyclic_subgroups} and \ref{prop_number_subgroups_cyclic_ord}).
Furthermore, in Section \ref{section_table} a table for the subgroups of the group $\Z_{12} \times \Z_{18}$ is given to illustrate
the applicability of our identities.

The results of Theorems \ref{prop_total_number_subgroups} and
\ref{prop_total_number_subgroups_ord} generalize and put in more
compact forms those of G.~C\u{a}lug\u{a}reanu \cite{Cal2004},
J.~Petrillo \cite{Pet2011} and M.~T\u{a}rn\u{a}uceanu
\cite{Tar2010}, obtained for $p$-groups of rank two, and included in
Corollaries \ref{cor_number_subgroups} and
\ref{cor_number_subgroups_order}. We remark that both the papers
\cite{Cal2004} and \cite{Pet2011} applied Goursat's lemma for groups
(the first one in a slightly different form), while the paper
\cite{Tar2010} used a different approach based on properties of
certain attached matrices.

Another representation of the subgroups of $\Z_m \times \Z_n$, and
the formulas of Theorems \ref{prop_total_number_subgroups}, \ref{prop_total_number_subgroups_ord} and \ref{prop_number_cyclic_subgroups},
but not Theorem \ref{prop_number_subgroups_type}, were also derived  in \cite{HHTW2012} using different group theoretical arguments.
That representation and the formula of Theorem \ref{prop_total_number_subgroups} was generalized to the case of the subgroups of the group
$\Z_m \times \Z_n \times \Z_r$ ($m,n,r\in \N$) \cite{HamTot2013}, using similar arguments, which are different from those of the present paper.

Note that in the case $m=n$ the subgroups of $\Z_n \times \Z_n$ play an
important role in the field of applied time-frequency analysis (cf. \cite{HHTW2012}). See \cite{NowTot2013} for
asymptotic results on the number of subgroups of $\Z_m \times \Z_n$.

Throughout the paper we use the following standard notations:
$\tau(n)$ is the number of the positive
divisors of $n$, $\phi$ denotes Euler's totient function, $\mu$ is
the M\"obius function, $*$ is the Dirichlet convolution of
arithmetic functions.

\section{Goursat's lemma for groups} \label{section_Goursat_lemma}

Goursat's lemma for groups (\cite[p.\ 43--48]{Gou1889}) can be
stated as follows:

\begin{prop} \label{prop_Goursat}
Let $G$ and $H$ be arbitrary groups. Then there is a bijection
between the set $S$ of all subgroups of $G \times H$ and the set $T$
of all $5$-tuples $(A, B, C, D, \Psi)$, where $B\, \nr \, A \le G$,
$D\, \nr \, C\le H$ and $\Psi: A/B \to C/D$ is an isomorphism (here
$\le$ denotes subgroup and $\nr$ denotes normal subgroup). More
precisely, the subgroup corresponding to $(A, B, C, D, \Psi)$ is
\begin{equation} \label{repres_K}
K= \{ (g,h)\in A\times C: \Psi(gB)=hD\}.
\end{equation}
\end{prop}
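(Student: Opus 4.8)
The plan is to construct explicit maps in both directions between $S$ and $T$ and verify that they are mutually inverse. For the map $S\to T$: given $K\le G\times H$, let $p,q$ be the two coordinate projections of $G\times H$ and put $A=p(K)$, $C=q(K)$, $B=\{g\in G:(g,e_H)\in K\}$, $D=\{h\in H:(e_G,h)\in K\}$. First I would record the easy facts that $A\le G$ and $C\le H$ (images of homomorphisms), that $B\le A$ and $D\le C$, and then that $B\nr A$ and $D\nr C$: for $a\in A$ choose $h$ with $(a,h)\in K$, and for $b\in B$ conjugate $(b,e_H)$ by $(a,h)$ to land back in $K\cap(G\times\{e_H\})$. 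Next I would define $\Psi\colon A/B\to C/D$ by sending $aB$ to $hD$, where $(a,h)\in K$ is any witness for $a\in A=p(K)$.

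The crux is showing that $\Psi$ is a well-defined isomorphism. Well-definedness: if $(a,h),(a',h')\in K$ with $a^{-1}a'\in B$, then multiplying out $(a,h)^{-1}(a',h')$ and correcting by $(a^{-1}a',e_H)\in K$ shows $(e_G,h^{-1}h')\in K$, so $hD=h'D$. The homomorphism property is immediate from componentwise multiplication in $K$; surjectivity follows because any $c\in C=q(K)$ has a witness $(g,c)\in K$ with $g\in A$; and injectivity follows because $\Psi(aB)=D$ produces a witness $(a,h)\in K$ with $h\in D$, hence $(a,e_H)\in K$ and $a\in B$. Unwinding the definitions then yields $K=\{(g,h)\in A\times C:\Psi(gB)=hD\}$, i.e.\ $K$ equals the subgroup that \eqref{repres_K} attaches to $(A,B,C,D,\Psi)$.

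For the map $T\to S$: given $(A,B,C,D,\Psi)$, let $K$ be the set defined by \eqref{repres_K}; I would check it is a subgroup by verifying closure and inverses, which reduce to $\Psi$ being a homomorphism together with $\Psi(B)=D$. Finally I would verify that the two constructions are inverse. Starting from a tuple and forming $K$, the recomputed data satisfies $p(K)=A$ and $q(K)=C$ (using the witnesses supplied by $\Psi$), then $\{g:(g,e_H)\in K\}=B$ and $\{h:(e_G,h)\in K\}=D$ by injectivity of $\Psi$, and the recomputed isomorphism agrees with $\Psi$ by construction; starting from $K$, forming a tuple, and then a subgroup $K'$ via \eqref{repres_K}, one inclusion is immediate and the reverse one is the same witness argument used for injectivity of $\Psi$. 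The only genuinely delicate point in all of this is the well-definedness and bijectivity of $\Psi$; the remaining steps are routine bookkeeping, and I note that in the abelian setting used throughout the rest of the paper the normality hypotheses $B\nr A$ and $D\nr C$ hold automatically.
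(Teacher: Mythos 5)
Your proposal is correct, and it is the standard proof of Goursat's lemma: projections give $A$ and $C$, the kernels of the projections restricted to $K$ give $B$ and $D$, normality follows by conjugating with a witness $(a,h)\in K$, and the induced map $aB\mapsto hD$ is checked to be a well-defined isomorphism whose graph condition recovers $K$; the witness-correction argument you use for well-definedness, injectivity, and the equality $K=\{(g,h)\in A\times C:\Psi(gB)=hD\}$ is exactly the delicate point, and you handle it properly. Note, however, that the paper itself offers no proof of Proposition \ref{prop_Goursat}: it is quoted as a known result, with the proof delegated to the cited literature (Goursat's original memoir and the expository accounts of Anderson--Camillo, Crawford--Wallace, Lambek, and Petrillo). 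So there is nothing in the paper to diverge from; your argument coincides with the classical one found in those references, and it would serve as a self-contained substitute for the citation, including the verification (which the paper also leaves implicit) that the correspondence is a genuine bijection rather than merely a surjection.
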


\begin{cor} \label{cor_Goursat} Assume that $G$ and $H$ are finite groups and that the subgroup
$K$ of $G \times H$ corresponds to the $5$-tuple $(A_K, B_K, C_K, D_K, \Psi_K)$ under this bijection.
Then one has $|A_K|\cdot |D_K| =|K|=|B_K|\cdot |C_K|$.
\end{cor}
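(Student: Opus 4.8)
The plan is to work directly from the explicit description \eqref{repres_K} of $K$ and to count its elements by fibering over one of the two coordinates, then to use that $\Psi_K$ is an isomorphism for the second equality.

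First I would fix $g\in A_K$ and count the $h\in C_K$ with $(g,h)\in K$, i.e.\ with $\Psi_K(gB_K)=hD_K$. Since $\Psi_K(gB_K)$ is one fixed coset of $D_K$ in $C_K$, the admissible $h$ are precisely the elements of that coset, hence there are exactly $|D_K|$ of them, and this count does not depend on $g$. Letting $g$ range over all of $A_K$ gives $|K|=|A_K|\cdot |D_K|$. Symmetrically, fibering over the second coordinate: for fixed $h\in C_K$ the element $hD_K$ lies in the image of $\Psi_K$ (since $\Psi_K$ is surjective), so the admissible $g$ form a single coset of $B_K$ in $A_K$, of size $|B_K|$; ranging $h$ over $C_K$ yields $|K|=|B_K|\cdot |C_K|$. (Alternatively, the equality $|A_K|\cdot|D_K|=|B_K|\cdot|C_K|$ can be obtained without the second fibering, simply because $\Psi_K\colon A_K/B_K\to C_K/D_K$ is a bijection, so $[A_K:B_K]=[C_K:D_K]$, i.e.\ $|A_K|/|B_K|=|C_K|/|D_K|$, which rearranges to the desired identity.)

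I do not anticipate a genuine obstacle here: the corollary is an immediate consequence of Lagrange's theorem combined with the defining formula \eqref{repres_K}. The only point deserving a line of care is that in the fibering step the number of admissible partners is genuinely independent of the chosen coordinate, which holds because all cosets of a fixed subgroup have the same cardinality; finiteness of $G$ and $H$ enters only to guarantee that the quantities $|A_K|,|B_K|,|C_K|,|D_K|,|K|$ are finite, so that the counting argument is meaningful.
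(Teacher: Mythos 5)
Your proof is correct and complete: fibering $K=\{(g,h)\in A_K\times C_K:\Psi_K(gB_K)=hD_K\}$ over the first coordinate gives exactly $|D_K|$ admissible $h$ for each $g\in A_K$, hence $|K|=|A_K|\cdot|D_K|$, and the bijectivity of $\Psi_K$ (equivalently $[A_K:B_K]=[C_K:D_K]$) yields $|K|=|B_K|\cdot|C_K|$. Note that the paper does not prove this corollary at all, referring instead to Crawford and Wallace \cite[Cor.\ 3]{CraWal1975}; your argument is the standard one that the citation covers, so you have in effect supplied the omitted proof rather than diverged from the paper's route.
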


For the history, proof, discussion, applications and a
generalization of Goursat's lemma see
\cite{AndCam2009,BauSenZve2011,CraWal1975,Lam1958,Pet2009,Pet2011}.
Corollary \ref{cor_Goursat} is given in \cite[Cor.\ 3]{CraWal1975}.

\section{Representation of the subgroups of \texorpdfstring{$\Z_m \times \Z_n$}{ZmxZn}}

For every $m,n\in \N$ let
\begin{equation} \label{def_J_m_n}
J_{m,n}:=\left\{(a,b,c,d,\ell)\in \N^5: a\mid m, b\mid a, c\mid n, d\mid
c, \frac{a}{b}=\frac{c}{d}, \right.
\end{equation}
\begin{equation*} \left.
\ell \le \frac{a}{b}, \, \gcd\left(\ell,\frac{a}{b} \right)=1\right\}.
\end{equation*}

Using the condition $a/b=c/d$ we deduce
$\lcm(a,c)=\lcm(a,ad/b)=\lcm(ad/d,ad/b)=ad/\gcd(b,d)$. That is,
$\gcd(b,d)\cdot \lcm(a,c)=ad$. Also, $\gcd(b,d)\mid \lcm(a,c)$.

For $(a,b,c,d,\ell)\in J_{m,n}$ define
\begin{equation} \label{def_K}
K_{a,b,c,d,\ell}:= \left\{\left(i\frac{m}{a}, i\ell \frac{n}{c}+j\frac{n}{d}\right): 0\le i\le a-1, 0\le
j\le d-1\right\}.
\end{equation}

\begin{thm} \label{prop_repres} Let $m,n\in \N$.

i) The map $(a,b,c,d,\ell)\mapsto K_{a,b,c,d,\ell}$ is a bijection
between the set $J_{m,n}$ and the set of subgroups of $(\Z_m \times
\Z_n,+)$.

ii) The invariant factor decomposition of the subgroup
$K_{a,b,c,d,\ell}$ is
\begin{equation} \label{H_isom}
K_{a,b,c,d,\ell} \simeq \Z_{\gcd(b,d)} \times \Z_{\lcm(a,c)}.
\end{equation}

iii) The order of the subgroup $K_{a,b,c,d,\ell}$ is $ad$ and its
exponent is $\lcm(a,c)$.

iv) The subgroup $K_{a,b,c,d,\ell}$ is cyclic if and only if
$\gcd(b,d)=1$.
\end{thm}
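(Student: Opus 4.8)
The plan is to apply Goursat's lemma (Proposition~\ref{prop_Goursat}) to $G=\Z_m$ and $H=\Z_n$, to translate each of the five ingredients of a Goursat $5$-tuple into the numerical parameters $(a,b,c,d,\ell)$, and then to read off all quantitative claims from the resulting explicit description of the subgroup.

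First I would recall that the subgroups of $\Z_m$ are exactly the cyclic groups $\langle m/a\rangle=\frac{m}{a}\Z_m$ with $a\mid m$, that $\langle m/a\rangle$ has order $a$, and that in an abelian group every subgroup is normal. Thus a Goursat datum $(A,B,C,D,\Psi)$ for $\Z_m\times\Z_n$ amounts to a choice of $A=\langle m/a\rangle$, $B=\langle m/b\rangle$ with $b\mid a\mid m$; $C=\langle n/c\rangle$, $D=\langle n/d\rangle$ with $d\mid c\mid n$; and an isomorphism $\Psi\colon A/B\to C/D$. Now $A/B$ is cyclic of order $a/b$ with generator $(m/a)+B$ and $C/D$ is cyclic of order $c/d$ with generator $(n/c)+D$, so $\Psi$ exists if and only if $a/b=c/d$, and then the possible $\Psi$ are exactly the maps $(m/a)+B\mapsto \ell\,(n/c)+D$ with $1\le \ell\le a/b$ and $\gcd(\ell,a/b)=1$ (the units modulo $a/b$). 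This matches the definition \eqref{def_J_m_n} of $J_{m,n}$, so $(a,b,c,d,\ell)\mapsto (A,B,C,D,\Psi)$ is a bijection onto the set $T$ of Goursat's lemma; composing with the bijection of Proposition~\ref{prop_Goursat} and substituting the generators into \eqref{repres_K} yields, after a direct computation using that $\Psi$ is a homomorphism, exactly the set $K_{a,b,c,d,\ell}$ of \eqref{def_K}. This proves~i).

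For the remaining parts, Corollary~\ref{cor_Goursat} gives $|K_{a,b,c,d,\ell}|=|A|\cdot|D|=ad$ (alternatively, the $ad$ pairs listed in \eqref{def_K} are pairwise distinct, since $m/a$ has order $a$ and $n/d$ has order $d$). Writing $\pi_1,\pi_2$ for the two coordinate projections of $\Z_m\times\Z_n$ and $K:=K_{a,b,c,d,\ell}$, one has $\pi_1(K)=\langle m/a\rangle$ of order $a$, while $\pi_2(K)$ is the subgroup of $\Z_n$ generated by $\ell\,(n/c)$ and $n/d=(c/d)(n/c)$; identifying $\langle n/c\rangle$ with $\Z_c$, this is the subgroup of $\Z_c$ generated by $\ell$ and $c/d$, which is all of $\Z_c$ because $\gcd(\ell,c/d)=\gcd(\ell,a/b)=1$. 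Hence $\pi_2(K)=\langle n/c\rangle$ has order $c$. It follows that every element of $K$ has order dividing $\lcm(a,c)$, whereas $K$ contains an element whose first coordinate has order $a$ and an element whose second coordinate has order $c$; therefore the exponent of $K$ is $\lcm(a,c)$, which together with $|K|=ad$ establishes~iii).

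Finally, $K$ is a subgroup of $\Z_m\times\Z_n\simeq \Z_{\gcd(m,n)}\times\Z_{\lcm(m,n)}$, a group of rank at most two, so $K\simeq \Z_{e_1}\times\Z_{e_2}$ with $e_1\mid e_2$, where $e_1e_2=|K|=ad$ and $e_2$ equals the exponent $\lcm(a,c)$; by the identity $\gcd(b,d)\cdot\lcm(a,c)=ad$ noted before the theorem we get $e_1=\gcd(b,d)$, and since $\gcd(b,d)\mid\lcm(a,c)$ (also noted there) this is indeed the invariant factor decomposition, proving~ii). Part~iv) is then immediate, since $K$ is cyclic precisely when $e_1=1$, i.e. when $\gcd(b,d)=1$. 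The only step requiring genuine care is the computation of $\pi_2(K)$ — checking that the ``twist'' $\ell$, being a unit modulo $a/b=c/d$, does not shrink the second projection — everything else being routine bookkeeping with divisors, from which ii), iii) and iv) then follow formally.
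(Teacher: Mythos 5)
Your proposal is correct, and for parts i), ii) and iv) it follows essentially the same route as the paper: Goursat's lemma applied to $G=\Z_m$, $H=\Z_n$, the identification of the $5$-tuples $(A,B,C,D,\Psi)$ with the parameters $(a,b,c,d,\ell)$ of $J_{m,n}$, and then the rank-$\le 2$ argument ($K\simeq \Z_u\times\Z_v$, $u\mid v$, $uv=ad$, $v=\text{exponent}$) combined with $\gcd(b,d)\lcm(a,c)=ad$ to pin down the invariant factors. The one place where you genuinely diverge is the exponent computation in iii): the paper exhibits the generators $(0,n/d)$ and $(m/a,\ell n/c)$, computes the order of the latter as $\lcm(a,c/\gcd(\ell,c))$, and then reduces $\lcm(d,a,c/\gcd(\ell,c))$ to $\lcm(a,c)$ by a chain of gcd/lcm identities using $\gcd(\ell,c/d)=1$; you instead observe that $\pi_1(K)=\langle m/a\rangle$ and, crucially, that $\pi_2(K)=\langle n/c\rangle$ because $\ell$ and $c/d$ generate $\Z_c$ when $\gcd(\ell,c/d)=1$, so the exponent both divides and is divisible by $\lcm(a,c)$. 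Your projection argument is cleaner and avoids the arithmetic manipulation entirely, at the cost of being slightly less explicit about which elements realize the exponent; the paper's computation is more mechanical but fully self-contained. Your sanity checks (that $\gcd(\ell,c/d)=\gcd(\ell,a/b)=1$ is exactly the hypothesis in the definition of $J_{m,n}$, and that $\gcd(b,d)\mid\lcm(a,c)$ makes the decomposition an invariant factor decomposition) are the right ones, so nothing essential is missing.
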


The Figure represents the subgroup $K_{6,2,18,6,1}$ of
$\Z_{12}\times \Z_{18}$. It has order $36$ and is isomorphic to
$\Z_2\times \Z_{18}$.

\medskip
\centerline{$%
\begin{array}{ccccccccccccc}
17 & \cdot & \cdot & \cdot & \cdot & \bullet & \cdot & \cdot & \cdot & \cdot & \cdot & \bullet & \cdot \\
16 & \cdot & \cdot & \bullet & \cdot & \cdot & \cdot & \cdot & \cdot & \bullet & \cdot & \cdot & \cdot \\
15 & \bullet & \cdot & \cdot & \cdot & \cdot & \cdot & \bullet & \cdot & \cdot & \cdot & \cdot & \cdot \\
14 & \cdot & \cdot & \cdot & \cdot & \bullet & \cdot & \cdot & \cdot & \cdot & \cdot &
\bullet & \cdot \\
13 & \cdot & \cdot & \bullet & \cdot & \cdot & \cdot & \cdot & \cdot & \bullet & \cdot & \cdot & \cdot \\
12 & \bullet & \cdot & \cdot & \cdot & \cdot & \cdot & \bullet & \cdot & \cdot & \cdot & \cdot & \cdot \\
11 & \cdot & \cdot & \cdot & \cdot & \bullet & \cdot & \cdot & \cdot & \cdot & \cdot & \bullet & \cdot \\
10 & \cdot & \cdot & \bullet & \cdot & \cdot & \cdot & \cdot & \cdot & \bullet & \cdot & \cdot & \cdot \\
9 & \bullet & \cdot & \cdot & \cdot & \cdot & \cdot & \bullet & \cdot & \cdot & \cdot & \cdot & \cdot \\
8 & \cdot & \cdot & \cdot & \cdot & \bullet & \cdot & \cdot & \cdot & \cdot & \cdot & \bullet & \cdot \\
7 & \cdot & \cdot & \bullet & \cdot & \cdot & \cdot & \cdot & \cdot & \bullet & \cdot & \cdot & \cdot \\
6 & \bullet & \cdot & \cdot & \cdot & \cdot & \cdot & \bullet & \cdot & \cdot & \cdot & \cdot & \cdot \\
5 & \cdot & \cdot & \cdot & \cdot & \bullet & \cdot & \cdot & \cdot & \cdot & \cdot & \bullet & \cdot \\
4 & \cdot & \cdot & \bullet & \cdot & \cdot & \cdot & \cdot & \cdot & \bullet & \cdot & \cdot & \cdot \\
3 & \bullet & \cdot & \cdot & \cdot & \cdot & \cdot & \bullet & \cdot & \cdot & \cdot & \cdot & \cdot \\
2 & \cdot & \cdot & \cdot & \cdot & \bullet & \cdot & \cdot & \cdot & \cdot & \cdot &
\bullet & \cdot \\
1 & \cdot & \cdot & \bullet & \cdot & \cdot & \cdot & \cdot & \cdot & \bullet & \cdot & \cdot & \cdot \\
0 & \bullet & \cdot & \cdot & \cdot & \cdot & \cdot & \bullet & \cdot & \cdot & \cdot & \cdot & \cdot \\
& 0\text{ } & 1\text{ } & 2\text{ } & 3\text{ } & 4\text{ } & 5\text{ } & 6%
\text{ } & 7\text{ } & 8\text{ } & 9\text{ } & 10 & 11%
\end{array}%
$} \vskip2mm

\centerline{Figure}
\medskip

\begin{proof} i) Apply Goursat's lemma for the groups $G=\Z_m$ and
$H=\Z_n$. We only need the following simple additional properties:

$\bullet$ all subgroups and all quotient groups of $\Z_n$ ($n\in \N$) are
cyclic;

$\bullet$ for every $n\in \N$ and every $a\mid n$, $a\in \N$, there is
precisely one (cyclic) subgroup of order $a$ of $\Z_n$;

$\bullet$ the number of automorphisms of $\Z_n$ is $\phi(n)$ and they can be represented
as $f:\Z_n \to \Z_n$, $f(x)=\ell x$, where $1\le \ell \le n$,
$\gcd(\ell,n)=1$.

With the notations of Proposition \ref{prop_Goursat}, let $|A|=a$,
$|B|=b$, $|C|=c$, $|D|=d$, where $a\mid m$, $b\mid a$, $c\mid n$,
$d\mid c$. Writing explicitly the corresponding subgroups and
quotient groups we deduce
\begin{equation*}
A=\langle m/a \rangle = \left\{0,\frac{m}{a}, 2\frac{m}{a}, \ldots, (a-1)\frac{m}{a} \right\} \le \Z_m,
\end{equation*}
\begin{equation*}
B=\langle m/b \rangle = \left\{0,\frac{m}{b}, 2\frac{m}{b}, \ldots, (b-1)\frac{m}{b} \right\}\le A,
\end{equation*}
\begin{equation*}
A/B=\left\langle \frac{m}{a}+B \right\rangle = \left\{B,\frac{m}{a}+B, 2\frac{m}{a}+B, \ldots, \left(\frac{a}{b}-1\right)\frac{m}{a}+B \right\},
\end{equation*}
and similarly
\begin{equation*}
C=\langle n/c \rangle = \left\{0,\frac{n}{c}, 2\frac{n}{c}, \ldots, (c-1)\frac{n}{c} \right\}\le \Z_n,
\end{equation*}
\begin{equation*}
D=\langle n/d \rangle = \left\{0,\frac{n}{d}, 2\frac{n}{d}, \ldots,
(d-1)\frac{n}{d} \right\}\le C,
\end{equation*}
\begin{equation*}
C/D=\left\langle \frac{n}{c}+D \right\rangle = \left\{D,\frac{n}{c}+D, 2\frac{n}{c}+D, \ldots, \left(\frac{c}{d}-1\right)\frac{n}{c}+D \right\}.
\end{equation*}

Now, in the case $a/b=c/d$ the values of the isomorphisms $\Psi: A/B \to C/D$ are
\begin{equation*}
\Psi\left(i\frac{m}{a}+B\right)=i\ell \frac{n}{c} +D, \qquad 0\le i\le \frac{a}{b}-1,
\end{equation*}
where $1\le \ell \le a/b$, $\gcd(\ell ,a/b)=1$. Using
\eqref{repres_K} we deduce that the corresponding subgroup is
\begin{equation*}
K= \left\{ \left(i\frac{m}{a}, k\frac{n}{c}\right) \in A \times C: \Psi\left(i\frac{m}{a}+B\right)=k\frac{n}{c}+ D \right\}
\end{equation*}
\begin{equation*}
= \left\{ \left(i\frac{m}{a},k\frac{n}{c}\right): 0\le i\le a-1, 0\le k\le c-1, i\ell \frac{n}{c}+D =
k\frac{n}{c}+D \right \},
\end{equation*}
where the last condition is equivalent, in turn, to $kn/c \equiv i\ell n/c$
(mod $n/d$), $k\equiv i\ell$ (mod $c/d$), and finally $k=i\ell+jc/d$,
$0\le j\le d-1$. Hence,
\begin{equation*}
K= \left\{ \left(i\frac{m}{a},\left(i\ell +j\frac{c}{d}\right)\frac{n}{c} \right): 0\le i\le a-1, 0\le j\le d-1\right\},
\end{equation*}
and the proof of the representation formula is complete.

ii-iii) It is clear from \eqref{def_K} that $|K_{a,b,c,d,\ell}|=ad=bc$ (or cf.
Corollary \ref{cor_Goursat}). Next we deduce the exponent of
$K_{a,b,c,d,\ell}$. According to \eqref{def_K} the subgroup
$K_{a,b,c,d,\ell}$ is generated by the elements $(0,n/d)$ and
$(m/a,\ell n/c)$. Here the order of $(0,n/d)$ is $d$. To obtain the
order of $(m/a,\ell n/c)$ note the following properties:

(1) $m\mid r(m/a)$ if and only if $m/\gcd(m,m/a) \mid r$ if and only
if $a\mid r$, and the least such $r\in \N$ is $a$,

(2) $n\mid t(\ell n/c)$ if and only if $n/\gcd(n,\ell n/c) \mid t$
if and only if $c/\gcd(\ell,c) \mid t$, and the least such $t\in \N$
is $c/\gcd(\ell,c)$.

Therefore the order of $(m/a,\ell n/c)$ is $\lcm(a,c/\gcd(\ell,c))$.
We deduce that the exponent of $K_{a,b,c,d,\ell}$ is
\begin{equation*}
\lcm \left(d,\lcm \left(a,\frac{c}{\gcd(\ell,c)}\right)\right) =
\lcm \left(d,a,\frac{c}{\gcd(\ell,c)}\right)
\end{equation*}
\begin{equation*}
= \lcm \left(\frac{ac}{ac/d}, \frac{ac}{c},
\frac{ac}{a\gcd(\ell,c)}\right) =
\frac{ac}{\gcd(ac/d,c,a\gcd(\ell,c))}
\end{equation*}
\begin{equation*}
= \frac{ac}{\gcd(c,a\gcd(c/d,\gcd(\ell,c)))}
= \frac{ac}{\gcd(c,a\gcd(\ell,\gcd(c/d,c)))}
\end{equation*}
\begin{equation*}
= \frac{ac}{\gcd(c,a\gcd(\ell ,c/d))}
=\frac{ac}{\gcd(a,c)}=\lcm(a,c),
\end{equation*}
using that $\gcd(\ell,c/d)=1$, cf. \eqref{def_J_m_n}.

Now $K_{a,b,c,d,\ell}$ is a subgroup of the abelian group $\Z_m
\times \Z_n$ having rank $\le 2$. Therefore, $K_{a,b,c,d,\ell}$ has
also rank $\le 2$. That is, $K_{a,b,c,d,\ell} \simeq \Z_u \times
\Z_v$ for unique $u$ and $v$, where $u\mid v$ and $uv=ad$. Hence
the exponent of $K_{a,b,c,d,\ell}$ is $\lcm(u,v)=v$. We obtain
$v=\lcm(a,c)$ and $u=ad/\lcm(a,c)=\gcd(b,d)$. This gives
\eqref{H_isom}.

iv) Clear from ii). This follows also from a general result given in
\cite[Th. \ 4.2]{BauSenZve2011}.
\end{proof}

\begin{remark} {\rm One can also write $K_{a,b,c,d,\ell}$ in the form
\begin{equation*}
\left\{\left(i\frac{m}{a}, i\ell \frac{n}{c}+j\frac{n}{d}\right): 0\le i\le a-1, j_i\le j\le j_i+d-1\right\},
\end{equation*}
where $j_i= -\lfloor i\ell d/c \rfloor$. Then for the second coordinate one has $0\le i\ell n/c+jn/d\le n-1$
for every given $i$ and $j$.}
\end{remark}

\section{Number of subgroups}

According to Theorem \ref{prop_repres} the total number $s(m,n)$ of
subgroups of $\Z_m \times \Z_n$ can be obtained by counting the
elements of the set $J_{m,n}$, which is now a purely number theoretical question.

\begin{thm} \label{prop_total_number_subgroups} For every $m,n\in \N$, $s(m,n)$
is given by
\begin{equation} \label{total_number_subgroups}
s(m,n)= \sum_{i\mid m, j\mid n} \gcd(i,j)
\end{equation}
\begin{equation} \label{total_number_subgroups_var}
= \sum_{t \mid \gcd(m,n)} \phi(t) \tau\left(\frac{m}{t} \right)
\tau\left(\frac{n}{t} \right).
\end{equation}
\end{thm}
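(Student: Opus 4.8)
The plan is to invoke Theorem \ref{prop_repres}, which identifies $s(m,n)$ with the cardinality $|J_{m,n}|$, and then to count the elements of $J_{m,n}$ by a convenient reparametrization. The key observation is that the coupling condition $a/b=c/d$ in \eqref{def_J_m_n} is best handled by introducing the common value $e:=a/b=c/d$. Given divisors $a\mid m$ and $c\mid n$, the admissible values of $e$ are exactly the common divisors of $a$ and $c$: indeed $e\mid a$ and $e\mid c$ since $a=be$, $c=de$, and conversely any $e\mid\gcd(a,c)$ yields $b=a/e$ and $d=c/e$, which automatically satisfy $b\mid a$ and $d\mid c$. For each such $e$ the number of admissible $\ell$ (with $1\le\ell\le e$ and $\gcd(\ell,e)=1$) is $\phi(e)$. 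Hence I would first record
\[
s(m,n)=|J_{m,n}|=\sum_{a\mid m}\ \sum_{c\mid n}\ \sum_{e\mid\gcd(a,c)}\phi(e).
\]

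The next step is to apply the elementary identity $\sum_{e\mid k}\phi(e)=k$ with $k=\gcd(a,c)$, which immediately collapses the innermost sum and gives \eqref{total_number_subgroups}, namely $s(m,n)=\sum_{a\mid m,\,c\mid n}\gcd(a,c)$ (writing $i$ for $a$ and $j$ for $c$). To obtain the second expression \eqref{total_number_subgroups_var}, I would instead interchange the order of summation in the triple sum above: the conditions $e\mid a$, $a\mid m$, $e\mid c$, $c\mid n$ force $e\mid\gcd(m,n)$, and for a fixed $e\mid\gcd(m,n)$, writing $a=ea'$ and $c=ec'$ shows that $a'$ ranges freely over the divisors of $m/e$ and $c'$ over the divisors of $n/e$, contributing $\tau(m/e)$ and $\tau(n/e)$ choices respectively. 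This yields
\[
s(m,n)=\sum_{e\mid\gcd(m,n)}\phi(e)\,\tau\!\left(\frac{m}{e}\right)\tau\!\left(\frac{n}{e}\right),
\]
which is \eqref{total_number_subgroups_var} after renaming $e$ as $t$. Equivalently, one may pass directly from \eqref{total_number_subgroups} to \eqref{total_number_subgroups_var} by substituting $\gcd(i,j)=\sum_{t\mid\gcd(i,j)}\phi(t)$ and swapping sums — the computation is the same.

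I do not expect a genuine obstacle here: the argument is bookkeeping once Theorem \ref{prop_repres} is granted. The only point requiring care is to check that the map $(a,b,c,d,\ell)\mapsto(a,c,e,\ell)$ with $e=a/b$ is a genuine bijection between $J_{m,n}$ and the set of quadruples with $a\mid m$, $c\mid n$, $e\mid\gcd(a,c)$, $1\le\ell\le e$, $\gcd(\ell,e)=1$, so that the ranges of summation match exactly, together with recalling the standard divisor-sum identities $\sum_{e\mid k}\phi(e)=k$ and $\#\{a\in\N:e\mid a\mid m\}=\tau(m/e)$. Both are routine, so the proof should be short.
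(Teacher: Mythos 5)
Your proposal is correct and follows essentially the same route as the paper: it counts $|J_{m,n}|$ by summing $\phi(e)$ over the parameters with $e=a/b=c/d$, uses $\sum_{e\mid k}\phi(e)=k$ to get \eqref{total_number_subgroups}, and regroups the triple sum by $e$ (counting $\tau(m/e)\tau(n/e)$ choices) to get \eqref{total_number_subgroups_var}. The only difference is cosmetic: you parametrize by $(a,c,e,\ell)$ directly, while the paper writes out complementary divisors $m=ax$, $a=be$, etc., but the computation is identical.
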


\begin{proof} We have
\begin{equation} \label{s_J}
s(m,n)= |J_{m,n}| = \sum_{\substack{a\mid m\\ b\mid a}} \sum_{\substack{c\mid
n\\ d\mid c}} \sum_{a/b=c/d=e} \phi(e).
\end{equation}

Let $m=ax$, $a=by$, $n=cz$, $c=dt$. Then, by the condition
$a/b=c/d=e$ we have $y=t=e$. Rearranging the terms of \eqref{s_J},
\begin{equation} \label{proof_line}
s(m,n)= \sum_{bxe=m} \sum_{dze=n} \phi(e) = \sum_{\substack{ix=m\\
jz=n}} \sum_{\substack{be=i\\ de=j}} \phi(e)
\end{equation}
\begin{equation*}
= \sum_{\substack{i\mid m\\
j\mid n}} \sum_{e\mid \gcd(i,j)} \phi(e) = \sum_{\substack{i\mid m\\
j\mid n}} \gcd(i,j),
\end{equation*}
finishing the proof of \eqref{total_number_subgroups}. To obtain the
formula \eqref{total_number_subgroups_var} write \eqref{proof_line}
as follows:
\begin{equation*}
s(m,n)= \sum_{\substack{ek=m\\ e\ell=n}} \phi(e)
\sum_{\substack{bx=k\\ dz=\ell}} 1 = \sum_{\substack{ek=m\\
e\ell=n}} \phi(e) \tau(k) \tau(\ell)
\end{equation*}
\begin{equation*}
= \sum_{e\mid \gcd(m,n)} \phi(e) \tau \left(\frac{m}{e}\right) \tau \left(\frac{n}{e}\right).
\end{equation*}
\end{proof}

Note that \eqref{total_number_subgroups} is a special case of an
identity deduced in \cite{Cal1987} by different arguments.

\begin{cor} \label{cor_number_subgroups} {\rm (\cite{Cal2004}, \cite[Prop.\ 2]{Pet2011},
\cite[Th.\ 3.3]{Tar2010})} The total number of subgroups of the $p$-group $\Z_{p^a}\times \Z_{p^b}$ ($1\le a\le b$) of rank two
is given by
\begin{equation} \label{prime_pow}
s(p^a,p^b)= \frac{(b-a+1)p^{a+2}-(b-a-1)p^{a+1}-(a+b+3)p+(a+b+1)}{(p-1)^2},
\end{equation}
\end{cor}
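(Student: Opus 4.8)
The plan is to specialize the general formula \eqref{total_number_subgroups_var} to the case $m=p^a$, $n=p^b$ with $1\le a\le b$, and then evaluate the resulting finite sum by elementary means. Since $\gcd(p^a,p^b)=p^a$, the divisors $t$ of $\gcd(m,n)$ are exactly $t=p^k$ for $0\le k\le a$, and for such $t$ we have $\tau(p^{a-k})=a-k+1$, $\tau(p^{b-k})=b-k+1$, and $\phi(p^k)$ equals $1$ if $k=0$ and $p^k-p^{k-1}$ if $k\ge 1$. Hence Theorem \ref{prop_total_number_subgroups} gives
\[
s(p^a,p^b)=\sum_{k=0}^{a}\phi(p^k)(a-k+1)(b-k+1).
\]

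Next I would split off the $k=0$ term and write $\phi(p^k)=p^k-p^{k-1}$ for $k\ge 1$, so that the sum telescopes in a convenient way; equivalently, reindex and use Abel summation (summation by parts) against the partial sums $\sum_{k\le K}\phi(p^k)=p^{K}$. Writing $f(k)=(a-k+1)(b-k+1)$, one gets
\[
s(p^a,p^b)=f(0)+\sum_{k=1}^{a}\bigl(p^{k}-p^{k-1}\bigr)f(k)
= p^{a}f(a)+\sum_{k=0}^{a-1}p^{k}\bigl(f(k)-f(k+1)\bigr),
\]
and $f(k)-f(k+1)=(a-k)+(b-k)-1=a+b-2k-1$ is linear in $k$. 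So the whole computation reduces to evaluating $\sum_{k=0}^{a-1}p^k$ and $\sum_{k=0}^{a-1}k\,p^k$, both of which are standard closed forms (geometric series and its derivative): $\sum_{k=0}^{a-1}p^k=\frac{p^a-1}{p-1}$ and $\sum_{k=0}^{a-1}k\,p^k=\frac{(a-1)p^{a+1}-ap^a+p}{(p-1)^2}$.

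Finally I would substitute these closed forms together with $p^a f(a)=p^a(b-a+1)$, put everything over the common denominator $(p-1)^2$, and simplify. The result should collapse to the stated expression
\[
s(p^a,p^b)=\frac{(b-a+1)p^{a+2}-(b-a-1)p^{a+1}-(a+b+3)p+(a+b+1)}{(p-1)^2}.
\]
I expect the only real obstacle to be the bookkeeping in this last algebraic simplification: one must carefully collect the coefficients of $p^{a+2}$, $p^{a+1}$, $p^{a}$ (whose contributions cancel), $p$, and the constant term, and check that the numerator is indeed divisible by $(p-1)^2$ before the given rational form is valid. A useful sanity check is to verify small cases, e.g. $a=b=1$ gives $s(p,p)=p+3$ (the $p+1$ subgroups of order $p$, plus the trivial subgroup and the whole group), which matches the formula.
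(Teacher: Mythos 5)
Your route is exactly the one the paper intends: the corollary is presented as a direct specialization of \eqref{total_number_subgroups_var} to $m=p^a$, $n=p^b$, giving $s(p^a,p^b)=\sum_{k=0}^{a}\phi(p^k)(a-k+1)(b-k+1)$, followed by elementary summation, and your Abel-summation organization of that sum is a perfectly good way to carry it out. One slip to fix: with $f(k)=(a-k+1)(b-k+1)$ one has
\begin{equation*}
f(k)-f(k+1)=(a-k)+(b-k)+1=a+b-2k+1,
\end{equation*}
not $a+b-2k-1$ as you wrote; carried forward, your version would give $p+1$ instead of $p+3$ in your own sanity check $a=b=1$. With the corrected difference, substituting $\sum_{k=0}^{a-1}p^k=\frac{p^a-1}{p-1}$ and $\sum_{k=0}^{a-1}kp^k=\frac{(a-1)p^{a+1}-ap^a+p}{(p-1)^2}$ and collecting terms does make the coefficient of $p^a$ vanish and yields precisely the numerator $(b-a+1)p^{a+2}-(b-a-1)p^{a+1}-(a+b+3)p+(a+b+1)$, so the argument is correct once that sign is repaired.
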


Now consider $s_{\delta}(m,n)$,
denoting the number of subgroups of order $\delta$ of $\Z_m \times
\Z_n$.

\begin{thm} \label{prop_total_number_subgroups_ord} For every $m,n,\delta \in \N$ such that $\delta \mid mn$,
\begin{equation} \label{total_number_subgroups_ord}
s_{\delta}(m,n)= \sum_{\substack{i\mid \gcd(m,\delta) \\ j\mid \gcd(n,\delta)\\
\delta\mid ij}} \phi\left(\frac{ij}{\delta}\right).
\end{equation}
\end{thm}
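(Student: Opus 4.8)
The plan is to invoke Theorem \ref{prop_repres} to turn $s_\delta(m,n)$ into a count over the set $J_{m,n}$, and then carry out a change of variables that eliminates $b$ and $d$.

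First, by parts (i) and (iii) of Theorem \ref{prop_repres}, $s_\delta(m,n)$ is the number of tuples $(a,b,c,d,\ell)\in J_{m,n}$ with $ad=\delta$. For a fixed quadruple $(a,b,c,d)$ the admissible values of $\ell$ are exactly the $\phi(e)$ residues coprime to $e:=a/b=c/d$, so
\begin{equation*}
s_\delta(m,n)=\sum_{\substack{a\mid m,\ b\mid a\\ c\mid n,\ d\mid c\\ a/b=c/d,\ ad=\delta}}\phi\!\left(\frac{a}{b}\right).
\end{equation*}

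The main (and only) step is to rewrite this sum in terms of the pair $(a,c)$ alone. From $ad=\delta$ and $a/b=c/d$ one gets $d=\delta/a$ and $b=ad/c=\delta/c$, so $(a,b,c,d)$ is determined by $(a,c)$; this is what guarantees no overcounting. It then remains to translate the conditions defining $J_{m,n}$ into conditions on $(a,c)$: the requirements $d=\delta/a\in\N$ and $b=\delta/c\in\N$ say precisely $a\mid\delta$ and $c\mid\delta$, which together with $a\mid m$, $c\mid n$ give $a\mid\gcd(m,\delta)$ and $c\mid\gcd(n,\delta)$; and both of the remaining conditions $b\mid a$ and $d\mid c$ collapse to the single divisibility $\delta\mid ac$ (e.g.\ $d\mid c\iff(\delta/a)\mid c\iff\delta\mid ac$). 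Under these, $e=a/b=ac/\delta$.

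Substituting and renaming $i:=a$, $j:=c$ then yields
\begin{equation*}
s_\delta(m,n)=\sum_{\substack{i\mid\gcd(m,\delta)\\ j\mid\gcd(n,\delta)\\ \delta\mid ij}}\phi\!\left(\frac{ij}{\delta}\right),
\end{equation*}
which is \eqref{total_number_subgroups_ord}; when $\delta\nmid mn$ the sum is empty, in accordance with $s_\delta(m,n)=0$. I do not anticipate any real obstacle: the argument is pure bookkeeping, the one point needing care being the verification that $(b,d)$ is uniquely recovered from $(a,c)$ and that the two conditions $b\mid a$, $d\mid c$ reduce to $\delta\mid ac$.
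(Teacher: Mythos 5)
Your proposal is correct and follows essentially the same route as the paper: both count the quintuples of Theorem \ref{prop_repres} with $ad=\delta$, pick up $\phi(a/b)$ from the choices of $\ell$, and reduce to a sum over the pair $(i,j)=(a,c)$ after observing that $b=\delta/c$, $d=\delta/a$, $e=ij/\delta$ are uniquely determined, with the constraints $i\mid\gcd(m,\delta)$, $j\mid\gcd(n,\delta)$, $\delta\mid ij$. The only difference is presentational: you eliminate $b,d$ directly, whereas the paper runs the same reduction through the substitution $m=bxe$, $n=dze$, $bde=\delta$ used in the proof of Theorem \ref{prop_total_number_subgroups}.
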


\begin{proof} Similar to the above proof. We have
\begin{equation*}
s_{\delta}(m,n)= \sum_{\substack{a\mid m\\ b\mid a}} \sum_{\substack{c\mid n \\
d\mid c}} \sum_{\substack{a/b=c/d=e\\ ad=bc=\delta}} \phi(e)
\end{equation*}
\begin{equation*}
= \sum_{bxe=m} \sum_{dze=n} \sum_{bde=\delta} \phi(e) = \sum_{\substack{ix=m\\
jz=n}} \sum_{\substack{be=i\\ de=j\\bde= \delta}} \phi(e),
\end{equation*}
where the only term of the inner sum is obtained for $e=ij/\delta$ provided that $\delta \mid ij$, $i\mid \delta$ and $j\mid \delta$. This gives
\eqref{total_number_subgroups_ord}.
\end{proof}

\begin{cor} \label{cor_number_subgroups_order} {\rm (\cite[Th.\ 3.3]{Tar2010})} The number of subgroups of order $p^c$ of the
$p$-group $\Z_{p^a}\times \Z_{p^b}$ ($1\le a\le b$) is given by
\begin{equation} \label{prime_pow_ord}
s_{p^c}(p^a,p^b)= \begin{cases} \frac{p^{c+1}-1}{p-1}, & c\le a\le
b,\\ \frac{p^{a+1}-1}{p-1}, & a\le c\le b,
\\ \frac{p^{a+b-c+1}-1}{p-1},
& a\le b\le c\le a+b.
\end{cases}
\end{equation}
\end{cor}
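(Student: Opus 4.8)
The plan is to specialize Theorem \ref{prop_total_number_subgroups_ord} to the $p$-group $\Z_{p^a}\times \Z_{p^b}$ with $\delta=p^c$ and $0\le c\le a+b$, and then to evaluate the resulting finite geometric-type sum case by case. First I would note that the divisors of $p^a$, $p^b$, $p^c$ are exactly the powers $p^r$ with $r$ in the appropriate range, so writing $i=p^r$, $j=p^s$, the conditions $i\mid\gcd(p^a,p^c)$, $j\mid\gcd(p^b,p^c)$, $p^c\mid ij$ translate into $0\le r\le\min(a,c)$, $0\le s\le\min(b,c)$, and $r+s\ge c$. For each admissible pair the summand $\phi(ij/\delta)=\phi(p^{r+s-c})$ equals $1$ if $r+s=c$ and equals $p^{r+s-c-1}(p-1)$ if $r+s>c$.

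Next I would reorganize the double sum by the value $k:=r+s$, which ranges over $\max(c,?)\le k\le \min(a,c)+\min(b,c)$; for a fixed $k$ the number of admissible pairs $(r,s)$ with $r+s=k$, $0\le r\le\min(a,c)$, $0\le s\le\min(b,c)$ is some explicit count, and one gets
\begin{equation*}
s_{p^c}(p^a,p^b)=\sum_{k\ge c} N_k\,\phi(p^{k-c}),
\end{equation*}
where $N_k$ is that count. The three cases of \eqref{prime_pow_ord} correspond to the three regimes $c\le a\le b$, $a\le c\le b$, and $a\le b\le c\le a+b$, in each of which $\min(a,c)$ and $\min(b,c)$ take definite values, so the count $N_k$ and the range of $k$ simplify. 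In the first regime the constraint $r+s\ge c$ is the only binding upper interaction near the bottom and one telescopes a geometric series to get $(p^{c+1}-1)/(p-1)$; in the symmetric middle and top regimes the same bookkeeping, using $\min(a,c)=a$ (and $\min(b,c)=b$ in the top case), yields $(p^{a+1}-1)/(p-1)$ and $(p^{a+b-c+1}-1)/(p-1)$ respectively. Alternatively, and perhaps more cleanly, I would observe that $\sum_{e\mid p^t}\phi(e)=p^t$ lets one collapse the inner sum before splitting into cases, reducing everything to counting lattice points under the constraints.

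The main obstacle I expect is purely organizational: correctly determining, in each of the three regimes, the exact range of the summation index and the multiplicity $N_k$, and checking the boundary values $k=c$ and $k=\min(a,c)+\min(b,c)$ where $N_k$ may drop below its generic value. Once the index ranges are pinned down, each case is a one-line geometric summation, so there is no genuine difficulty beyond careful case analysis; I would verify the final formulas against small values (e.g. $a=b=1$, or $c=0$ giving $s_{1}=1$, and $c=a+b$ giving $s_{p^{a+b}}=1$) to catch off-by-one errors.
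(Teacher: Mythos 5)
Your proposal is correct and follows exactly the route the paper intends: Corollary \ref{cor_number_subgroups_order} is stated as an immediate specialization of Theorem \ref{prop_total_number_subgroups_ord} to $m=p^a$, $n=p^b$, $\delta=p^c$, with the paper leaving the geometric-series bookkeeping implicit. Your case analysis (and in particular the clean collapse of the inner sum via $\sum_{e\mid p^t}\phi(e)=p^t$) carries out that omitted computation correctly in all three regimes.
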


The number of subgroups of $\Z_m \times \Z_n$ with a given
isomorphism type $\Z_A \times \Z_B$ is given by the following
formula.

\begin{thm} \label{prop_number_subgroups_type} Let $m,n\in \N$ and let $A,B\in \N$ such that $A\mid B$, $AB\mid mn$. Let $A\mid \gcd(m,n)$,
Then the number $N_{A,B}(m,n)$ of subgroups of $\Z_m \times \Z_n$, which are isomorphic to $\Z_A \times \Z_B$ is given by
\begin{equation} \label{total_number_subgroups_given_type}
N_{A,B}(m,n)= \sum_{\substack{i\mid m, j\mid n \\ AB \mid ij\\
\lcm(i,j)= B}} \phi \left(\frac{ij}{AB}\right).
\end{equation}

If $A\nmid \gcd(m,n)$, then $N_{A,B}(m,n)=0$.
\end{thm}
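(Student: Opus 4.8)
The plan is to translate the count into a sum over the parameter set $J_{m,n}$ by means of Theorem \ref{prop_repres}, and then to run the same number-theoretic manipulation that establishes Theorem \ref{prop_total_number_subgroups_ord}.

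First, by parts i) and ii) of Theorem \ref{prop_repres}, a subgroup $K_{a,b,c,d,\ell}$ is isomorphic to $\Z_A\times\Z_B$ if and only if $\gcd(b,d)=A$ and $\lcm(a,c)=B$, since the invariant factor decomposition is uniquely determined. Thus $N_{A,B}(m,n)$ is the number of tuples $(a,b,c,d,\ell)\in J_{m,n}$ meeting these two conditions. Since $\gcd(b,d)$ divides $b\mid a\mid m$ and divides $d\mid c\mid n$, every such tuple forces $A=\gcd(b,d)\mid\gcd(m,n)$; this already proves the last assertion, namely $N_{A,B}(m,n)=0$ when $A\nmid\gcd(m,n)$.

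Assume now $A\mid\gcd(m,n)$. Using the identity $\gcd(b,d)\cdot\lcm(a,c)=ad$ recorded just after \eqref{def_J_m_n}, the pair of conditions ``$\gcd(b,d)=A$, $\lcm(a,c)=B$'' is equivalent to ``$ad=AB$, $\lcm(a,c)=B$'' (indeed $ad=\gcd(b,d)\lcm(a,c)=AB$, and conversely $\gcd(b,d)=ad/\lcm(a,c)=AB/B=A$). Writing $e=a/b=c/d$ and summing over the $\phi(e)$ admissible values of $\ell$, we obtain
\[
N_{A,B}(m,n)=\sum_{\substack{a\mid m,\ b\mid a\\ c\mid n,\ d\mid c}}\ \sum_{\substack{a/b=c/d=e\\ ad=AB,\ \lcm(a,c)=B}}\phi(e),
\]
which is exactly the sum occurring in the proof of Theorem \ref{prop_total_number_subgroups_ord} with $\delta=AB$, enriched by the single extra constraint $\lcm(a,c)=B$.

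Finally, I would run the substitution from that proof: set $m=ax$, $a=by$, $n=cz$, $c=dt$, so that $a/b=c/d=e$ forces $y=t=e$, whence $m=bxe$, $n=dze$, and in the variables $i:=a=be$, $j:=c=de$ one has $i\mid m$, $j\mid n$, $\lcm(i,j)=\lcm(a,c)=B$, and $ij/e=ad=AB$. The last relation gives $e=ij/(AB)$, a positive integer precisely when $AB\mid ij$; then $b=AB/j$ and $d=AB/i$ are automatically positive integers, since $\lcm(i,j)=B$ gives $i\mid B\mid AB$ and $j\mid B\mid AB$, so $(a,b,c,d)$ is recovered from $(i,j)$. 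Hence $(a,b,c,d,\ell)\mapsto(i,j,\ell)$ is a bijection onto the triples with $i\mid m$, $j\mid n$, $AB\mid ij$, $\lcm(i,j)=B$, and summing $\phi(e)=\phi(ij/(AB))$ over these yields \eqref{total_number_subgroups_given_type}. The only step needing a moment's attention is the equivalence of the two pairs of conditions above, i.e.\ the observation that once the order $ad$ and the exponent $\lcm(a,c)$ of the subgroup are pinned down, the smaller invariant factor $\gcd(b,d)=ad/\lcm(a,c)$ is forced; everything else is the bookkeeping already done for Theorem \ref{prop_total_number_subgroups_ord}.
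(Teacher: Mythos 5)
Your proof is correct and follows essentially the same route as the paper: reduce via Theorem \ref{prop_repres}~ii) to counting tuples in $J_{m,n}$ with $\gcd(b,d)=A$, $\lcm(a,c)=B$, and then reuse the substitution from the proof of Theorem \ref{prop_total_number_subgroups_ord} with $\delta=AB$. In fact your bookkeeping (trading $\gcd(b,d)=A$ for $ad=AB$ and carrying $\lcm(i,j)=B$ explicitly, plus the divisibility check $i,j\mid B\mid AB$) spells out the step the paper leaves implicit in its terse final sentence, and your observation that $\gcd(b,d)\mid\gcd(m,n)$ handles the $A\nmid\gcd(m,n)$ case just as the paper does.
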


\begin{proof} Using Proposition \ref{prop_repres}/ ii) we have
\begin{equation*}
N_{A,B}(m,n) = \sum_{\substack{a\mid m\\ b\mid a}} \sum_{\substack{c\mid n \\
d\mid c}} \sum_{\substack{a/b=c/d=e\\ \gcd(b,d)=A\\ \lcm(a,c)=B}} \phi(e).
\end{equation*}

Here the condition $\gcd(b,d)=A$ implies that $A\mid m$ and $A\mid n$. In this case
\begin{equation*}
N_{A,B}(m,n) = \sum_{\substack{bxe=m\\ dze=n}} \sum_{\substack{\gcd(b,d)=A \\ bde=AB}} \phi(e) = \sum_{\substack{ix=m\\
jz=n}} \sum_{\substack{be=i\\ de=j}} \sum_{\substack{bde= AB\\ \gcd(b,d)=A}} \phi(e),
\end{equation*}
where the only term of the inner sum is obtained for $e=ij/(AB)$
provided that $AB\mid ij$, $i\mid AB$ and $j\mid AB$.
\end{proof}

Remark: It is also possible that $N_{A,B}(m,n) = 0$ even when all
the conditions in the hypotheses are satisfied, e.g., see the Table
in Section \ref{section_table}.

From the representation of the cyclic subgroups given in Proposition
\ref{prop_repres}/ iv) we also deduce the next result.

\begin{thm} \label{prop_number_cyclic_subgroups} For every $m,n\in \N$ the number $c(m,n)$ of cyclic
subgroups of $\Z_m\times \Z_n$ is
\begin{equation} \label{total_number_cyclic_subgr_2_1}
c(m,n)=  \sum_{i \mid m, j\mid n} \phi(\gcd(i,j))
\end{equation}
\begin{equation} \label{total_number_cyclic_subgr_2_2}
= \sum_{t \mid \gcd(m,n)} (\mu*\phi)(t) \tau
\left(\frac{m}{t}\right) \tau\left(\frac{n}{t} \right).
\end{equation}
\end{thm}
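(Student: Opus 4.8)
The plan is to count the cyclic subgroups of $\Z_m \times \Z_n$ by restricting the bijection of Theorem \ref{prop_repres} to those $5$-tuples in $J_{m,n}$ that correspond to cyclic subgroups. By part iv) of that theorem, $K_{a,b,c,d,\ell}$ is cyclic precisely when $\gcd(b,d)=1$. Hence
\begin{equation*}
c(m,n) = \sum_{\substack{a\mid m\\ b\mid a}} \sum_{\substack{c\mid n\\ d\mid c}} \sum_{\substack{a/b=c/d=e\\ \gcd(b,d)=1}} \phi(e),
\end{equation*}
and the entire argument is then the same number-theoretic bookkeeping used in the proof of Theorem \ref{prop_total_number_subgroups}, with the extra constraint $\gcd(b,d)=1$ carried along.

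First I would introduce, exactly as before, the substitutions $m=bxe$, $n=dze$ coming from $m=ax$, $a=by$, $n=cz$, $c=dt$ together with $y=t=e$, and then regroup via $i=be$, $j=de$, so that $i\mid m$, $j\mid n$, $x=m/i$, $z=n/j$ are free. For fixed $i,j$ the inner sum becomes $\sum_{\substack{be=i,\ de=j\\ \gcd(b,d)=1}} \phi(e)$. Now $e$ must divide $\gcd(i,j)$ and, given such $e$, one has $b=i/e$, $d=j/e$ determined; the condition $\gcd(b,d)=1$ is exactly the statement that $e=\gcd(i,j)$. Thus the inner sum collapses to the single term $\phi(\gcd(i,j))$, which yields \eqref{total_number_cyclic_subgr_2_1}.

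For \eqref{total_number_cyclic_subgr_2_2} I would instead keep $e$ as the outer summation variable, writing $m=ek$, $n=e\ell$ with $e\mid\gcd(m,n)$; but here the count of admissible pairs $(b,d)$ with $be=i$, $de=j$ for $i\mid m$, $j\mid n$ and $\gcd(b,d)=1$ must replace the naive $\tau(k)\tau(\ell)$. Concretely, after the substitution the number of valid $(i,j)$ with the required coprimality is a multiplicative function of $e$ convolved appropriately; the clean way is to observe that $c = f * (\tau \otimes \tau)$-type structure and instead derive \eqref{total_number_cyclic_subgr_2_2} directly from \eqref{total_number_cyclic_subgr_2_1} by Möbius-type manipulation: $\phi(\gcd(i,j)) = \sum_{t \mid \gcd(i,j)} (\mu*\phi)(t)$, since $\phi = \1 * (\mu*\phi)$ where $\1$ is the constant-one function, hence $\phi(n)=\sum_{t\mid n}(\mu*\phi)(t)$. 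Substituting and swapping the order of summation gives
\begin{equation*}
c(m,n) = \sum_{i\mid m,\, j\mid n} \ \sum_{t \mid \gcd(i,j)} (\mu*\phi)(t) = \sum_{t\mid \gcd(m,n)} (\mu*\phi)(t) \sum_{\substack{i\mid m,\ t\mid i}} 1 \cdot \sum_{\substack{j\mid n,\ t\mid j}} 1 = \sum_{t\mid \gcd(m,n)} (\mu*\phi)(t)\, \tau\!\left(\frac{m}{t}\right)\tau\!\left(\frac{n}{t}\right).
\end{equation*}

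The only real point requiring care — the "hard part," though it is hardly hard — is verifying that the coprimality condition $\gcd(b,d)=1$ forces $e$ to be the full $\gcd(i,j)$ rather than merely a divisor of it; this is where the difference between counting all subgroups (sum over all $e\mid\gcd(i,j)$) and counting cyclic ones (only $e=\gcd(i,j)$) enters, and it is exactly what turns $\gcd(i,j)=\sum_{e\mid\gcd(i,j)}\phi(e)$ into the single term $\phi(\gcd(i,j))$. Everything else is the same rearrangement of Dirichlet-type sums already carried out for Theorem \ref{prop_total_number_subgroups}, plus the standard identity $\phi = \1 * (\mu * \phi)$.
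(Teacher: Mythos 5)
Your proposal is correct and matches the paper's intended argument: the paper's own proof is just the remark that one proceeds ``similar to the above proofs, using that for the cyclic subgroups one has $\gcd(b,d)=1$,'' and you have filled in exactly those details, correctly observing that $\gcd(b,d)=1$ forces $e=\gcd(i,j)$ so the inner sum collapses to $\phi(\gcd(i,j))$. Your derivation of \eqref{total_number_cyclic_subgr_2_2} from \eqref{total_number_cyclic_subgr_2_1} via $\phi=\1*(\mu*\phi)$ and an interchange of summation is the natural completion of the paper's sketch and is sound.
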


\begin{proof} Similar to the above proofs, using that for the cyclic subgroups one has $\gcd(b,d)=1$.
\end{proof}

Let $c_{\delta}(m,n)$ denote the number of cyclic subgroups of order $\delta$
of $\Z_m \times \Z_n$.

\begin{thm} \label{prop_number_subgroups_cyclic_ord} For every $m,n,\delta \in \N$ such that $\delta \mid mn$,
\begin{equation*}
c_{\delta}(m,n)= \sum_{\substack{i\mid m, j\mid n\\ \lcm(i,j)= \delta}} \phi(\gcd(i,j)).
\end{equation*}
\end{thm}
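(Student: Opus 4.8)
The plan is to mimic the proofs of Theorems~\ref{prop_total_number_subgroups_ord} and \ref{prop_number_subgroups_type}, now imposing simultaneously the cyclicity condition $\gcd(b,d)=1$ from Proposition~\ref{prop_repres}/iv) and the order condition. By Proposition~\ref{prop_repres}, a cyclic subgroup of order $\delta$ corresponds to a tuple $(a,b,c,d,\ell)\in J_{m,n}$ with $\gcd(b,d)=1$ and $ad=\delta$; moreover, by part iii) (or ii)) such a subgroup is isomorphic to $\Z_{\lcm(a,c)}$, so its order $\delta$ equals its exponent $\lcm(a,c)$. Thus I would start from
\begin{equation*}
c_{\delta}(m,n)= \sum_{\substack{a\mid m\\ b\mid a}} \sum_{\substack{c\mid n \\ d\mid c}} \sum_{\substack{a/b=c/d=e\\ \gcd(b,d)=1\\ ad=\delta}} \phi(e).
\end{equation*}

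Next I would perform the same substitution as before: write $m=bxe$, $n=dze$ (using $a=be$, $c=de$ from $a/b=c/d=e$), and reindex by $i=be=a$, $j=de=c$, so that $i\mid m$, $j\mid n$, $b=i/e$, $d=j/e$. The condition $ad=\delta$ becomes $ij/e=\delta$, i.e.\ $e=ij/\delta$ — this is forced, exactly as in the proof of Theorem~\ref{prop_total_number_subgroups_ord}, and requires $\delta\mid ij$, $i\mid\delta$, $j\mid\delta$. The cyclicity condition $\gcd(b,d)=1$ becomes $\gcd(i/e,j/e)=1$, i.e.\ $\gcd(i,j)=e=ij/\delta$; equivalently $\lcm(i,j)=ij/\gcd(i,j)=\delta$. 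So the sum collapses to $c_{\delta}(m,n)=\sum \phi(ij/\delta)$ over $i\mid m$, $j\mid n$ with $\lcm(i,j)=\delta$, and since $ij/\delta=\gcd(i,j)$ here, this is $\sum_{i\mid m,\,j\mid n,\,\lcm(i,j)=\delta}\phi(\gcd(i,j))$, which is the claimed formula.

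The main thing to check carefully is the bookkeeping of the three conditions and verifying they are mutually consistent: that the single equation $\lcm(i,j)=\delta$ simultaneously encodes both "order $=\delta$" (via $e=ij/\delta$ being an integer dividing $i$ and $j$) and "cyclic" (via $\gcd(i,j)=e$). In particular one should confirm that $\lcm(i,j)=\delta$ with $i\mid m$, $j\mid n$ automatically gives $\delta\mid mn$ and $i\mid\delta$, $j\mid\delta$, so no spurious constraints are lost or gained. I expect no real obstacle here — the argument is a direct specialization of the two preceding proofs — so the write-up can simply say "Similar to the above proofs, combining the conditions of Theorems~\ref{prop_total_number_subgroups_ord} and the cyclicity requirement $\gcd(b,d)=1$; note that here $e=ij/\delta=\gcd(i,j)$, whence $\lcm(i,j)=\delta$."
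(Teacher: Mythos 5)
Your argument is correct, but it takes a slightly different route from the paper. The paper's proof is a one-liner: it obtains the formula as the special case $A=1$, $B=\delta$ of Theorem~\ref{prop_number_subgroups_type}, since a cyclic subgroup of order $\delta$ is exactly one isomorphic to $\Z_1\times\Z_\delta$, and in \eqref{total_number_subgroups_given_type} the condition $\delta\mid ij$ is implied by $\lcm(i,j)=\delta$ while $\phi(ij/\delta)=\phi(\gcd(i,j))$. You instead re-run the Goursat-parameter computation from scratch: summing $\phi(e)$ over tuples in $J_{m,n}$ with $\gcd(b,d)=1$ and $ad=\delta$, reindexing by $i=a=be$, $j=c=de$, forcing $e=ij/\delta$, and observing that $\gcd(b,d)=\gcd(i,j)/e=1$ is equivalent to $\lcm(i,j)=\delta$. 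This is exactly the manipulation underlying the paper's proof of Theorem~\ref{prop_number_subgroups_type}, so in substance you are reproving that theorem in the case $A=1$ rather than citing it; your bookkeeping (that $e\mid i$, $e\mid j$ translate to $i\mid\delta$, $j\mid\delta$, and that $\lcm(i,j)=\delta$ subsumes all the side conditions) is accurate. What your version buys is a self-contained verification that the order and cyclicity constraints interact as claimed; what the paper's version buys is brevity and reuse of the already-established general formula.
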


\begin{proof} This is a direct consequence of \eqref{total_number_subgroups_given_type} obtained in the case $A=1$ and $B=\delta$.
\end{proof}

In the paper \cite{HHTW2012} the identities
\eqref{total_number_subgroups}, \eqref{total_number_subgroups_var},
\eqref{total_number_subgroups_ord}, \eqref{total_number_cyclic_subgr_2_1}
and \eqref{total_number_cyclic_subgr_2_2} were derived using another approach. The identity
\eqref{total_number_cyclic_subgr_2_1}, as a special case of a
formula valid for arbitrary finite abelian groups, was obtained by
the author \cite{Tot2011,Tot2012} using different arguments.
Finally, we  remark that the functions $(m,n)\mapsto s(m,n)$ and
$(m,n)\mapsto c(m,n)$ are multiplicative, viewed as arithmetic
functions of two variables. See \cite{HHTW2012,Tot2013} for details.

\section{Table of the subgroups of \texorpdfstring{$\Z_{12} \times \Z_{18}$}{Z12xZ18}}
\label{section_table}

To illustrate our results we describe the subgroups of the group
$\Z_{12} \times \Z_{18}$ ($m=12$, $n=18$). According to Proposition
\ref{prop_number_subgroups_type}, there exist subgroups isomorphic
to $\Z_A \times \Z_B$ ($A\mid B$) only if $A\mid
\gcd(12,18)=6$, that is $A\in \{1,2,3,6 \}$ and $AB\mid 12\cdot 18=216$.

\[
\vbox{\offinterlineskip \hrule \halign{ \strut
\vrule \hfill \ # \ \hfill & \vrule \hfill \ # \ \hfill \vrule \vrule &
\vrule \hfill \ # \ \hfill & \vrule \hfill \ # \ \hfill \vrule \cr
Total number subgroups & $80$ & &\cr \noalign{\hrule \, \hrule}
Number subgroups order $1$ &  1 & Number subgroups order $18$ &  12 \cr \noalign{\hrule}
Number subgroups order $2$ &  3 & Number subgroups order $24$ &  4 \cr \noalign{\hrule}
Number subgroups order $3$ &  4 & Number subgroups order $27$ &  1 \cr \noalign{\hrule}
Number subgroups order $4$ &  3 & Number subgroups order $36$ &  12 \cr \noalign{\hrule}
Number subgroups order $6$ &  12 & Number subgroups order $54$ &  3 \cr \noalign{\hrule}
Number subgroups order $8$ &  1 & Number subgroups order $72$ &  4 \cr \noalign{\hrule}
Number subgroups order $9$ &  4 & Number subgroups order $108$ &  3 \cr \noalign{\hrule}
Number subgroups order $12$ &  12 & Number subgroups order $216$ &  1 \cr \noalign{\hrule \, \hrule}
Number cyclic subgroups & $48$ & Number noncyclic subgroups & $32$ \cr \noalign{\hrule \, \hrule}
Number subgroups $\simeq \Z_1$ &  1 & Number subgroups $\simeq \Z_2 \times \Z_2$ &  1 \cr \noalign{\hrule}
Number subgroups $\simeq \Z_2$ &  3 & Number subgroups $\simeq \Z_2 \times \Z_4$ &  1 \cr \noalign{\hrule}
Number subgroups $\simeq \Z_3$ &  4 & Number subgroups $\simeq \Z_2 \times \Z_6$ &  4 \cr \noalign{\hrule}
Number subgroups $\simeq \Z_4$ &  2 & Number subgroups $\simeq \Z_2 \times \Z_{12}$ &  4 \cr \noalign{\hrule}
Number subgroups $\simeq \Z_6$ &  12 & Number subgroups $\simeq \Z_2 \times \Z_{18}$ &  3 \cr \noalign{\hrule}
Number subgroups $\simeq \Z_9$ &  3 & Number subgroups $\simeq \Z_2 \times \Z_{36}$ &  3 \cr \noalign{\hrule}
Number subgroups $\simeq \Z_{12}$ &  8 & Number subgroups $\simeq \Z_3 \times \Z_3$ &  1 \cr \noalign{\hrule}
Number subgroups $\simeq \Z_{18}$ &  9 & Number subgroups $\simeq \Z_3 \times \Z_6$ &  3 \cr \noalign{\hrule}
Number subgroups $\simeq \Z_{36}$ &  6 & Number subgroups $\simeq \Z_3 \times \Z_9$ &  1 \cr \noalign{\hrule}
& & Number subgroups $\simeq \Z_3 \times \Z_{12}$ &  2 \cr \noalign{\hrule}
& & Number subgroups $\simeq \Z_3 \times \Z_{18}$ &  3 \cr \noalign{\hrule}
& & Number subgroups $\simeq \Z_3 \times \Z_{36}$ &  2 \cr \noalign{\hrule}
& & Number subgroups $\simeq \Z_6 \times \Z_6$ &  1 \cr \noalign{\hrule}
& & Number subgroups $\simeq \Z_6 \times \Z_{12}$ &  1 \cr \noalign{\hrule}
& & Number subgroups $\simeq \Z_6 \times \Z_{18}$ &  1 \cr \noalign{\hrule}
& & Number subgroups $\simeq \Z_6 \times \Z_{36}$ &  1   \cr} \hrule}
\]
\medskip
\centerline{Table of the subgroups of $\Z_{12} \times \Z_{18}$}


\section{Acknowledgements} The author gratefully acknowledges support
from the Austrian Science Fund (FWF) under the project Nr.
M1376-N18. The author thanks D.~D.~Anderson and J.~Petrillo for
sending him copies of their papers quoted in the References. The author is thankful to the referee
for careful reading of the manuscript and for many helpful suggestions on the presentation of this paper.


\noindent L. T\'oth \\
Institute of Mathematics, Universit\"at f\"ur Bodenkultur \\
Gregor Mendel-Stra{\ss}e 33, A-1180 Vienna, Austria \\ and \\
Department of Mathematics, University of P\'ecs \\ Ifj\'us\'ag u. 6,
H-7624 P\'ecs, Hungary \\ E-mail: {\tt ltoth@gamma.ttk.pte.hu}

\end{document}